\begin{document}

\newtheorem{theorem}{Theorem}[section]
\newtheorem{result}[theorem]{Result}
\newtheorem{fact}[theorem]{Fact}
\newtheorem{conjecture}[theorem]{Conjecture}
\newtheorem{lemma}[theorem]{Lemma}
\newtheorem{proposition}[theorem]{Proposition}
\newtheorem{remark}[theorem]{Remark}
\newtheorem{corollary}[theorem]{Corollary}
\newtheorem{facts}[theorem]{Facts}
\newtheorem{question}[theorem]{Question}
\newtheorem{props}[theorem]{Properties}

\theoremstyle{definition}
\newtheorem*{rem}{Remark}
\newtheorem*{rems}{Remarks}
\newtheorem*{definition}{Definition}

\newtheorem{ex}[theorem]{Example}

\newcommand{\notes} {\noindent \textbf{Notes.  }}
\newcommand{\note} {\noindent \textbf{Note.  }}
\newcommand{\defn} {\noindent \textbf{Definition.  }}
\newcommand{\defns} {\noindent \textbf{Definitions.  }}
\newcommand{\x}{{\bf x}}
\newcommand{\z}{{\bf z}}
\newcommand{\B}{{\bf b}}
\newcommand{\V}{{\bf v}}
\newcommand{\T}{\mathcal{T}}
\newcommand{\Z}{\mathbb{Z}}
\newcommand{\Hp}{\mathbb{H}}
\newcommand{\D}{\mathbb{D}}
\newcommand{\R}{\mathbb{R}}
\newcommand{\N}{\mathbb{N}}
\renewcommand{\B}{\mathbb{B}}
\newcommand{\C}{\mathbb{C}}
\newcommand{\dt}{{\mathrm{det }\;}}
 \newcommand{\adj}{{\mathrm{adj}\;}}
 \newcommand{\0}{{\bf O}}
 \newcommand{\av}{\arrowvert}
 \newcommand{\zbar}{\overline{z}}
 \newcommand{\htt}{\widetilde{h}}
\newcommand{\ty}{\mathcal{T}}
\renewcommand\Re{\operatorname{Re}}
\renewcommand\Im{\operatorname{Im}}
\newcommand{\diam}{\operatorname{diam}}
\newcommand{\capac}{\operatorname{cap}}
\newcommand{\card}{\operatorname{card}}
\newcommand{\meas}{\operatorname{meas}}

\newcommand{\ds}{\displaystyle}
\numberwithin{equation}{section}

\newcommand{\eqn}{\begin{equation}}
\newcommand{\eqnend}{\end{equation}}

\renewcommand{\theenumi}{(\roman{enumi})}
\renewcommand{\labelenumi}{\theenumi}

\title{Superattracting fixed points of quasiregular mappings}

\author{Alastair Fletcher}
\address{Department of Mathematical Sciences, Northern Illinois University,
DeKalb, IL 60115-2888, USA}
\email{fletcher@math.niu.edu}
\author{Daniel A. Nicks}
\address{School of Mathematical Sciences, University of Nottingham, Nottingham NG7 2RD, UK}
\email{Dan.Nicks@nottingham.ac.uk}

\begin{abstract}
We investigate the rate of convergence of the iterates of an $n$-dimensional quasiregular mapping within the basin of attraction of a fixed point of high local index. A key tool is a refinement of a result that gives bounds on the distortion of the image of a small spherical shell. This result also has applications to the rate of growth of quasiregular mappings of polynomial type, and to the rate at which the iterates of such maps can escape to infinity.
\end{abstract}

\maketitle

\section{Introduction}

\subsection{Background}
One of the key features of holomorphic functions with respect to complex dynamics is their behaviour near fixed points. There is a complete classification of the possible iterative behaviours that can arise near a fixed point based on the value of the derivative at that point, see for example \cite{Milnor}. We say that a fixed point $z_0$ of a holomorphic function $f$ is \emph{superattracting} if $f'(z_0)=0$.
In this case, it follows from B\"ottcher's Theorem that there is a conjugation of $f$ to a map $z\mapsto z^d$ in a neighbourhood of $z_0$. That is, there exists a conformal map $B$ defined in a neighbourhood of $z_0$ such that $B(z_0) = 0$ and $B(f(z)) = B(z)^d$.
Hence $B(f^k(z)) = B(z)^{d^k}$ and from this it can be deduced that
\eqn  \log \log \frac{1}{|f^k(z)-z_0|} = k\log d + O(1), \quad \mbox{as } k\to\infty. \label{holorate1}\eqnend
It follows that if $z$ and $w$ are two points near to $z_0$, then there is a constant $\alpha$ such that
\eqn \frac{1}{\alpha} <  \frac{ \log |f^k(z)-z_0|}{\log |f^k(w)-z_0|} < \alpha  \label{holorate2}\eqnend
for all $k$.
Hence the rates at which different points are attracted towards the superattracting fixed point are comparable.

Quasiregular mappings are a natural generalization to higher dimensions of holomorphic functions in the plane. They exhibit many properties analogous to those of holomorphic functions and, in particular, there exist quasiregular versions of Picard's Theorem and Montel's Theorem.
See \cite{Rickman} for an introduction to the theory of quasiregular mappings. One aim of this article is to seek quasiregular analogies to the iterative results described above.

Holomorphic functions are differentiable everywhere and are locally injective precisely where the derivative is non-zero. In contrast, quasiregular mappings need only be differentiable almost everywhere.  To extend the notion of the multiplicity or valency of a holomorphic function, we define the \emph{local index} of a quasiregular mapping $f$ at the point $x$ to be
\[ i(x,f) = \inf_U \sup_y \card (f^{-1}(y)\cap U), \]
where the infimum is taken over all neighbourhoods $U$ of $x$. Thus $f$ is locally injective at $x$ if and only if $i(x,f)=1$.

The first research into iteration of quasiregular mappings studied the class of \emph{uniformly} quasiregular mappings; that is, those for which there is a uniform bound on the dilatation of the iterates. Behaviour near the fixed points of uniformly quasiregular mappings was studied in detail in \cite{HMM}. There it was shown that if a uniformly quasiregular mapping is locally injective at a fixed point, then it is in fact bi-Lipschitz there, whereas typically a quasiregular mapping is only locally H\"older continuous. A fixed point $x_0$ of a uniformly quasiregular mapping $f$ is called \emph{superattracting} if it is not locally injective at $x_0$. It follows from H\"older continuity (see Theorem \ref{rick2} below) that if $0\in E\subseteq\R^n$ and if $f:E\to \R^n$ is uniformly $K$-quasiregular with a superattracting fixed point $x_0=0$, then, for each $m\in\N$, there is a neighbourhood $U$ of $0$ and a constant $C$ such that, for all $x\in U$,
\[ |f^m(x)| \leq C|x|^{\mu_m},\]
where $\mu_m = (i(0,f)^m/K)^{1/(n-1)}$. Since $i(0,f)>1$, a large choice of $m$ will ensure that $\mu_m>1$ and hence that $f^{mj}\to 0$ as $j\to\infty$. Moreover, we can then deduce that the full sequence of iterates $f^k$ converges uniformly to $0$ on some neighbourhood of this fixed point.

For a general non-uniformly quasiregular mapping, the dilatation of the iterates will typically increase as the number of iterations increases. Even if a quasiregular mapping is not injective at some fixed point $x_0$, then it is possible that the iterates can fail to converge to $x_0$ on any neighbourhood of this fixed point. For example, the winding mapping given in polar coordinates in the plane by $(r,\theta) \mapsto (r, K\theta)$, for $K\in \N$, has both dilatation and local index at $0$ equal to $K$. It is easy to see that no other points near $0$ converge to $0$ under iteration.

We will see that the situation changes when the local index of a fixed point is greater than the inner dilatation. This condition appears to occur naturally in the iteration theory of polynomial type quasiregular maps; see \cite{BergFJ, FN, Sun00} and Section~\ref{sect:poly}.

\begin{definition}
Let $E\subseteq \R^n$ be a domain.
For a non-constant quasiregular mapping ${f:E\to \R^n}$, we will say that $x_0\in E$ is a \emph{strongly superattracting fixed point} if $f(x_0)=x_0$ and if ${i(x_0,f)>K_I(f)}$, where $K_I(f)$ denotes the inner dilatation of $f$. For such a point, we define the basin of attraction in $E$ as
\[ \mathcal{A}(x_0) = \{x\in E : f^k(x)\in E \mbox{ for all } k\in\N \mbox{ and } f^k(x)\to x_0\}. \]
\end{definition}

By the H\"{o}lder continuity of quasiregular maps (see Theorem~\ref{rick2} below), any strongly superattracting fixed point $x_0$ has a neighbourhood on which the iterates $f^k$ converge uniformly to $x_0$. Thus it follows that there is locally uniform convergence on the open set $\mathcal{A}(x_0)$.

\begin{rems}\mbox{}
\begin{enumerate}
\item For a holomorphic function $f$ defined on a domain in the complex plane, the inner dilatation $K_I(f)=1$ and so any superattracting fixed point is strongly superattracting in the above sense.
\item In \cite{FF}, B\"ottcher coordinates were constructed in a neighbourhood of infinity for degree two quasiregular mappings of the plane with constant complex dilatation. When the dilatation is less than $2$ in these examples, infinity is a strongly superattracting fixed point, and convergence to infinity is uniform on a neighbourhood of infinity.
\end{enumerate}
\end{rems}

\subsection{Statement of results}

 We refer to Section ~\ref{sect:defns} for the full definition of a quasiregular mapping and its associated inner dilatation $K_I(f)$ and outer dilatation $K_O(f)$. However, to state our results, we introduce here the following notation.
Let $f:E\to \R^n$ be quasiregular, $x\in E$ and $0\le r<d(x,\partial E)$.
Then we define
\eqn l(x,r) = \inf_{|y-x|=r} |f(y)-f(x)| \quad \ \mbox{ and } \ \quad L(x,r) = \sup_{|y-x|=r} |f(y)-f(x)|. \label{l,L defn} \eqnend
We may instead write $l(x,f,r)$ and $L(x,f,r)$ to make clear the dependence on the function.

Our main tool is the following result which gives bounds on the distortion of the image of a small spherical shell under a quasiregular map. It is a refinement of Lemma~3.9 of \cite{GMRV98}, though for completeness we shall give a full proof using path families.

\begin{theorem}\label{thm:ineqs}
Let $f:E\to \R^n$ be quasiregular and non-constant and let $x\in E$. Then there exist $C>1$ and $r_0>0$ such that, for all $0<T\le 1$ and all $r\in(0,r_0)$,
\eqn T^{-\mu} \le \frac{L(x,r)}{l(x,Tr)} \label{L/l} \le C^2 T^{-\nu} \eqnend
and
\eqn \frac{T^{-\mu}}{C^2} \le \frac{l(x,r)}{L(x,Tr)} \le T^{-\nu}, \label{l/L} \eqnend
where $\mu = (i(x,f)/K_I(f))^{1/(n-1)}$ and $\nu = (K_O(f)i(x,f))^{1/(n-1)}$. Moreover, $C$ depends only on $n$, $K_O(f)$ and $i(x,f)$.
\end{theorem}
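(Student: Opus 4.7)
The plan is to use the classical modulus-of-curve-families technique. In a normal neighborhood of $x$, I take the ring curve family in a thin spherical shell, apply Poletsky's inequality and V\"ais\"al\"a's $K_O$-inequality for quasiregular maps, and compare the image modulus with that of standard ring families of paths between concentric spheres in $\R^n$.

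First, I fix $r_0 > 0$ small enough that $\overline{B(x, r_0)} \subset E$, $f^{-1}(f(x)) \cap \overline{B(x, r_0)} = \{x\}$, and $f$ has local multiplicity exactly $i = i(x, f)$ on $B(x, r_0)$; such an $r_0$ exists by the very definition of the local index. For $T \in (0, 1]$ and $r \in (0, r_0)$, let $\Gamma$ be the family of all paths in the spherical shell $\{y : Tr < |y-x| < r\}$ joining its two boundary spheres, whose modulus is the explicit quantity $M(\Gamma) = \omega_{n-1}(\log(1/T))^{1-n}$. The key geometric observation is that every path in $f\Gamma$ connects a point of $f(\partial B(x, Tr))$ (lying in the closed annular region $\{l(x, Tr) \le |z-f(x)| \le L(x, Tr)\}$) to a point of $f(\partial B(x, r))$ (lying in $\{l(x, r) \le |z-f(x)| \le L(x, r)\}$).

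For the upper bound $l(x,r)/L(x,Tr) \le T^{-\nu}$ in (\ref{l/L}), I combine two facts. When $L(x, Tr) < l(x, r)$, the family $f\Gamma$ is a subfamily of the reference ring family $\Gamma^*$ of paths in $\R^n$ joining $\overline{B(f(x), L(x, Tr))}$ to $\R^n \setminus B(f(x), l(x, r))$, giving $M(f\Gamma) \le M(\Gamma^*) = \omega_{n-1}(\log(l(x,r)/L(x,Tr)))^{1-n}$. Pairing this with the sharp normal-neighborhood form of V\"ais\"al\"a's $K_O$-inequality, which in effect yields $M(\Gamma) \le K_O(f)\, i(x,f)\, M(f\Gamma)$ in this setting, and taking $(n-1)$-th roots produces the exponent $\nu = (K_O i)^{1/(n-1)}$. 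The complementary lower bound $L(x,r)/l(x,Tr) \ge T^{-\mu}$ in (\ref{L/l}) is proved by the dual argument: apply Poletsky's inequality $M(f\Gamma) \le K_I(f) M(\Gamma)$ to a reference ring family in the image annulus centered at $f(x)$, and use that the $f$-lifts of this reference family form a subfamily of the paths in the shell with multiplicity $i$. The factor $i$ from the lifts combines with $K_I$ to produce $\mu = (i/K_I)^{1/(n-1)}$. The two remaining mixed bounds in (\ref{L/l}) and (\ref{l/L}) then follow from these by combining with the single-scale distortion estimate $L(x, s) \le C_0 l(x, s)$, which is the case $T = 1$ and holds with $C_0 = C_0(n, K_O, i)$; this is essentially Lemma~3.9 of \cite{GMRV98}.

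The main obstacle is tracking the multiplicity $i(x, f)$ carefully through both modulus inequalities so that the exponents emerge as exactly $\mu$ and $\nu$, and so that the constant $C$ depends only on $n$, $K_O(f)$ and $i(x,f)$ rather than also on $K_I(f)$. This forces the use of the sharp normal-neighborhood forms of both Poletsky's and V\"ais\"al\"a's inequalities, in which the multiplicity factor $i$ appears explicitly. A secondary technicality is the degenerate regime $L(x, Tr) \ge l(x, r)$, in which the ring-family comparison fails; there the claimed bounds must be read off directly from the single-scale distortion estimate.
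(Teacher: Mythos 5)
Your proposal is correct and follows essentially the same route as the paper: both sides come from modulus estimates on ring-type path families in a spherical shell inside a normal neighbourhood, with the $\nu$-exponent produced by the $K_O$-inequality with multiplicity (Theorem~II.2.4 of Rickman) and the $\mu$-exponent by V\"ais\"al\"a's inequality (Corollary~II.9.2), and the two mixed bounds obtained by combining with the single-scale estimate $L(x,s)\le C\,l(x,s)$ (the paper's Lemma~\ref{lem:rick1}, i.e.\ Rickman Theorem~II.4.3). The only cosmetic difference is that you label the $K_I$/multiplicity inequality ``Poletsky'' when it is really V\"ais\"al\"a's refinement that carries the factor $i(x,f)$, but you correctly flag that you need that sharper normal-neighbourhood form, which is exactly what the paper invokes.
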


Recall that when $x_0$ is a strongly superattracting fixed point of a quasiregular map $f$, the iterates $f^k$ converge locally uniformly to $x_0$ on $\mathcal{A}(x_0)$.
Our next result compares the rates at which different orbits can converge to such a strongly superattracting fixed point. We write $O^{-}(x):=\bigcup_{k\ge0}f^{-k}(x)$ for the backward orbit of a point.

\begin{theorem}\label{thm:superattr}
Let $f:E\to \R^n$ be quasiregular and let $x_0\in E$ be a strongly superattracting fixed point.
\begin{enumerate}
\item If $x,y\in\mathcal{A}(x_0)\setminus O^{-}(x_0)$, then there exists $N\in\N$ such that, for all large $k$,
\[  |f^{k+N}(y)-x_0| < |f^k(x)-x_0|. \]
\item Let $F$ be a compact subset of $\mathcal{A}(x_0)\setminus O^{-}(x_0)$. Then there exists $\alpha>1$ and $j\in\N$ such that
\[ \frac{1}{\alpha} < \frac{ \log |f^k(x)-x_0|}{\log | f^k(y)-x_0|} < \alpha\]
for all $x,y\in F$ and all $k\ge j$.
\end{enumerate}
\end{theorem}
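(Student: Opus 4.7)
The plan is to reduce both parts to a small neighbourhood of $x_0$, iterate the single-step bounds from Theorem~\ref{thm:ineqs}, and then deduce (ii) from (i) by a symmetry argument.

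Since $x_0$ is strongly superattracting, $\mu := (i(x_0,f)/K_I(f))^{1/(n-1)} > 1$, and H\"older continuity ensures $f^k \to x_0$ locally uniformly on $\mathcal{A}(x_0)$. Fix $r_0 > 0$ small enough for Theorem~\ref{thm:ineqs} to apply at $x_0$ with reference radius $r_0$. After replacing $F$ by $f^{k_0}(F)$ for some large $k_0$ (which alters the conclusions of (i) and (ii) only by finitely many indices) I may assume $F \subset B(x_0,r_0)$ and that the forward orbits remain in this ball. Theorem~\ref{thm:ineqs} then yields the single-step estimate
\[
M_4 r^\nu \le l(x_0, r) \le |f(z)-x_0| \le L(x_0, r) \le M_1 r^\mu, \qquad |z-x_0| = r \le r_0.
\]
Writing $a_k := |f^k(x)-x_0|$ and $b_k := |f^k(y)-x_0|$, iteration of the upper bound gives $b_{k+N} \le C_N b_k^{\mu^N}$, while iteration of the lower bound gives $|\log a_{k+N}| \le \nu^N |\log a_k| + O_N(1)$.

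For part (i) I aim to show $b_{k+N} < a_k$ for large $k$, which via the iterated upper bound reduces to $\mu^N |\log b_k| > |\log a_k| + \log C_N$, i.e.\ eventually to $\mu^N > |\log a_k|/|\log b_k|$. To bound the pairwise ratio of logarithms I apply Theorem~\ref{thm:ineqs} at $x_0$ with $T$ equal to the ratio of the two current orbit radii, obtaining the recurrence
\[
C^{-2} T_k^\nu \le T_{k+1} \le C^2 T_k^\mu, \qquad T_k := \min(a_k,b_k)/\max(a_k,b_k),
\]
which, combined with the one-step lower bound for $|\log b_k|$, controls $|\log a_k|/|\log b_k|$. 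Once this ratio is bounded by some $\alpha_0$, any $N$ with $\mu^N > \alpha_0$ works for large~$k$, giving~(i).

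For part (ii) I apply (i) and its version with $x,y$ swapped, obtaining $N,M \in \N$ such that $b_{k+N} < a_k$ and $a_{k+M} < b_k$ for large $k$. The iterated lower bound gives $|\log a_{k+M}| \le \nu^M |\log a_k| + O_M(1)$, and combining with $|\log b_k| < |\log a_{k+M}|$ yields $|\log b_k|/|\log a_k| \le \nu^M + o(1)$ as $k\to\infty$. The symmetric estimate provides the matching lower bound, so (ii) holds with $\alpha = \nu^{\max(N,M)} + 1$. For uniformity over the compact set $F$, continuity of the iterates and compactness of $F$ allow $N$ and $M$ to be chosen uniformly on $F\times F$, using that $F$ is bounded away from both $x_0$ and $O^{-}(x_0)$.

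The main obstacle is the first step: naive independent bounds put $|\log a_k|$ in $[\mu^k\alpha_x,\nu^k\beta_x]$ and similarly for $|\log b_k|$, and these do not on their own bound the ratio when $\mu < \nu$. The argument must therefore exploit that both orbits are driven by the same map $f$, and hence by the same functions $L(x_0,\cdot)$ and $l(x_0,\cdot)$; the refined simultaneous comparison of $L$ and $l$ at different radii provided by Theorem~\ref{thm:ineqs} is precisely what makes the pairwise recurrence for $T_k$ tight enough to deliver the bound.
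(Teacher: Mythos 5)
The core ingredients you use (iterating the bounds of Theorem~\ref{thm:ineqs}, deriving a one-step recurrence for the ratio of orbit radii, and deducing part~(ii) from part~(i) by a symmetry argument) are all present in the paper's proof, and your derivation of~(ii) from~(i) via the H\"older estimate for $f^M$ is essentially what the paper does. However, your proof of part~(i) has a genuine gap.

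You aim to first establish a uniform bound $\sup_k |\log a_k|/|\log b_k|\le\alpha_0$ and then take $N$ with $\mu^N>\alpha_0$. The proposed source of this bound is the recurrence $C^{-2}T_k^{\nu}\le T_{k+1}\le C^2 T_k^{\mu}$ for $T_k=\min(a_k,b_k)/\max(a_k,b_k)$, together with one-step bounds on $|\log b_k|$. But this does not deliver the bound: if $a_k<b_k$ then $|\log a_k|/|\log b_k|=1+|\log T_k|/|\log b_k|$, the recurrence gives $|\log T_k|\lesssim\nu^k$, while the one-step lower growth estimate gives only $|\log b_k|\gtrsim\mu^k$. With $\mu<\nu$ the quotient $|\log T_k|/|\log b_k|$ can then grow like $(\nu/\mu)^k$, so nothing you have written rules out the ratio being unbounded. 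You flag this as the ``main obstacle'' and assert that the recurrence ``is precisely what makes'' the bound work, but this is not shown, and the most natural reading of your strategy does not close the gap. Moreover, the plan is circular in spirit: bounding $|\log a_k|/|\log b_k|$ is essentially the content of~(ii), which you later derive \emph{from}~(i).

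The fix is to avoid the log-ratio bound entirely in proving~(i), which is what the paper's Lemma~\ref{lem:lk>Lk} achieves. Your own one-step inequality is already enough: if $T^{*}:=C^{-2/(\mu-1)}$, then $C^2 T^{\mu}<T$ for $0<T<T^{*}$, so the recurrence $\tilde T_{k+1}\le C^2\tilde T_k^{\mu}$ (applied to the time-shifted ratio $\tilde T_k:=|f^{k+N}(y)-x_0|/|f^k(x)-x_0|$, once $N$ and the starting index $k_0$ are chosen so that $\tilde T_{k_0}<T^{*}$ and orbits lie in $B(x_0,r')$) immediately gives $\tilde T_k<T^{*}<1$ for all $k\ge k_0$ by induction, which is precisely part~(i). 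One also needs the monotonicity of $L(x_0,\cdot)$ and $l(x_0,\cdot)$ from Lemma~\ref{lem:incr} to justify the comparison $|f^k(z)-x_0|\le L^k(x_0,|z-x_0|)$ and its lower counterpart; you use these iterated bounds without remark.
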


The second part of Theorem~\ref{thm:superattr} generalizes \eqref{holorate2} from the holomorphic setting to the quasiregular setting. One may ask whether there is a similar generalization of \eqref{holorate1}. In this direction, if $x_0$ is a strongly superattracting fixed point of a quasiregular map $f$ and if $y$ is near to $x_0$, then iteration of the H\"older continuity inequality leads to
\[ k\log \mu +O(1) \le \log \log \frac{1}{|f^k(y)-x_0|} \le k\log \nu +O(1), \]
as $k\to\infty$, where $\mu$ and $\nu$ are as in Theorem~\ref{thm:ineqs} (cf.~\cite[Lemma~2.3]{F}).
The following example shows that this estimate is essentially the best possible. Take $1<K<2$ and real sequences $r_m\to 0$ and $s_m\to0$ such that $r_1=1$ and $0<r_{m+1}<s_m<r_m$. Now define a function $g$ on the unit disc in $\C$ by $g(0)=0$ and
\[ g(z) =
\begin{cases}
A_m z^2 |z|^{2K-2}, & s_m \le |z| < r_m, \\
B_m z^2 |z|^{2/K -2}, & r_{m+1} \le |z| < s_m,
\end{cases} \]
where the positive constants $A_m$ and $B_m$ are chosen to ensure continuity (with, say, $A_1=1$). This function $g$ is quasiregular with $K_I(g)=K_O(g)=K$ and $i(0,g)=2$. Thus $0$ is a strongly superattracting fixed point of $g$; also, $\mu=2/K$ and $\nu=2K$. By choosing the sequences $(r_m)$ and $(s_m)$ appropriately, each of the annuli $\{z: s_m\le|z|<r_m\}$ and $\{z: r_{m+1}\le|z|<s_m\}$ can be made to contain a large number of successive iterates $g^k(z)$. With $(r_m)$ and $(s_m)$ chosen suitably,  such a function can have
\[ \liminf_{k\to\infty} \frac1k \log \log \frac{1}{|g^k(z)|} = \log\frac2K = \log \mu \]
while also
\[ \limsup_{k\to\infty} \frac1k \log \log \frac{1}{|g^k(z)|} = \log 2K = \log \nu. \]
\begin{rem}
To summarize, in the holomorphic setting the iterates approach a superattracting fixed point at the rate specified by \eqref{holorate1} and hence different orbits are comparable as in \eqref{holorate2}. In the quasiregular case, Theorem~\ref{thm:superattr} gives an exactly analogous comparison for different orbits approaching a strongly superattracting fixed point. However, \emph{a priori} we have a less clear idea of the actual rate of approach.
\end{rem}

\subsection{Applications to polynomial type mappings}\label{sect:poly}

 A non-constant quasiregular mapping $f:\R^n\to\R^n$ is said to be of \emph{polynomial type} if $|f(x)|\to\infty$ as $|x|\to\infty$. If this is not the case, then $f$ has an essential singularity at infinity and is said to be of \emph{transcendental type}. It is well known that a map is of polynomial type if and only if
 \[ \deg f := \sup_{y\in\R^n}\operatorname{card} f^{-1}(y) < \infty. \]
 As usual, we denote the maximum modulus function by $M(r,f):=\sup_{|x|=r} |f(x)|$. Bergweiler \cite[Lemma~3.3]{B} has proved that if $S>1$ and $f$ is quasiregular of transcendental type, then
 \eqn \lim_{r\to\infty} \frac{M(Sr,f)}{M(r,f)} = \infty. \label{transM(Ar)} \eqnend
 He notes  that this implies that $\log M(r,f)/\log r \to\infty$ as $r\to\infty$ \cite[Lemma~3.4]{B} (see also~\cite{Jarvi}). If we now consider quasiregular maps of polynomial type, then H\"{o}lder continuity at infinity quickly gives that $\log M(r,f)/\log r$ is bounded above for large $r$.
 As before, this in turn implies that
 \[ \liminf_{r\to\infty} \frac{M(Sr,f)}{M(r,f)} < \infty. \]
The following improvement is an application of Theorem~\ref{thm:ineqs}.

\begin{theorem}\label{thm:M(Ar)}
Let $f:\R^n\to\R^n$ be quasiregular of polynomial type and let $S>1$. Then
\[ \limsup_{r\to\infty} \frac{M(Sr,f)}{M(r,f)} < \infty. \]
\end{theorem}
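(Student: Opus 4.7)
The plan is to convert this growth estimate at infinity into a local statement near the origin via the spherical inversion $\phi(x)=x/|x|^2$, and then apply Theorem~\ref{thm:ineqs}. Set $g=\phi\circ f\circ\phi$. Since $f$ is of polynomial type, $|f(\phi(y))|\to\infty$ as $y\to 0$, so $g(y)=\phi(f(\phi(y)))$ is well defined on some punctured neighbourhood of $0$ and satisfies $|g(y)|=1/|f(\phi(y))|\to 0$ as $y\to 0$. Extend $g$ by $g(0)=0$. Because $\phi$ is a M\"obius transformation we have $K_O(g)=K_O(f)$ and $K_I(g)=K_I(f)$ on the punctured neighbourhood, and removability of isolated singularities for bounded quasiregular maps promotes $g$ to a quasiregular map on a full neighbourhood of $0$. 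Finally, the fact that preimages under $f$ of any point of sufficiently large modulus must themselves have large modulus shows that $i(0,g)=\deg f$.

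Now apply Theorem~\ref{thm:ineqs} to $g$ at the origin: there exist constants $C>1$ and $r_0>0$ such that, for all $0<T\le 1$ and all $r\in(0,r_0)$,
\[ \frac{L(0,g,r)}{l(0,g,Tr)} \le C^2 T^{-\nu}, \qquad \nu=\bigl(K_O(f)\deg f\bigr)^{1/(n-1)}. \]
The identity $|g(y)|=1/|f(\phi(y))|$, combined with $|\phi(y)|=1/|y|$, yields
\[ L(0,g,r) = \frac{1}{m(1/r,f)}, \qquad l(0,g,r) = \frac{1}{M(1/r,f)}, \]
where $m(R,f):=\inf_{|x|=R}|f(x)|$. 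Substituting $R=1/r$ and $T=1/S$, the inequality above becomes
\[ \frac{M(SR,f)}{m(R,f)} \le C^2 S^{\nu} \]
for every sufficiently large $R$. Since $m(R,f)\le M(R,f)$, this immediately gives $M(SR,f)/M(R,f)\le C^2 S^{\nu}$ uniformly in large $R$, and taking $\limsup_{R\to\infty}$ completes the proof.

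The main technical point is the quasiregular extension of $g$ across $0$ together with the identification $i(0,g)=\deg f$; both are standard for polynomial type quasiregular mappings, and encode the idea that infinity should be treated as an interior fixed/regular point in the same spirit as in classical complex dynamics. Once this local setup is recorded, the conclusion is a direct substitution into Theorem~\ref{thm:ineqs}, and the proof does not require any new estimates.
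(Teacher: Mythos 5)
Your proof is correct and follows essentially the same route as the paper: conjugate by the spherical inversion to bring infinity to the origin, translate $M$ and $m$ into $l$ and $L$ for the conjugate map, and apply the right-hand side of \eqref{L/l} from Theorem~\ref{thm:ineqs} with $T=1/S$, finally using $m(R,f)\le M(R,f)$. One small remark: the identification $i(0,g)=\deg f$ is needed to pin down the specific value of $\nu$, but for this theorem any finite $\nu$ suffices, so that step is not essential to the conclusion.
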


By combining \eqref{transM(Ar)} and Theorem~\ref{thm:M(Ar)}, we see that to characterize a map $f$ as being of polynomial type or of transcendental type it is enough to consider $M(Sr_j,f)/M(r_j,f)$ for some sequence $r_j\to\infty$ and some $S>1$.

The escaping set
\[ I(f) = \{x\in\R^n : f^k(x)\to\infty \mbox{ as } k\to\infty\} \]
of a polynomial type quasiregular mapping with $\deg f>K_I(f)$ was studied in \cite{FN} (cf.~\cite{BFLM} for transcendental type). For such mappings, the point at infinity can be viewed as a strongly superattracting fixed point and this allows us to obtain a result for the escaping set similar to Theorem~\ref{thm:superattr}. In fact, we find that all points of $I(f)$ escape to infinity at comparable rates, and so $I(f)$ coincides with the \emph{fast escaping set}
\[ A(f) = \{x\in\R^n: \exists L\in \N, |f^{k+L}(x)| > M^k(R,f) \text{ for all } k\in \N \}, \]
where $M^k(R,f)$ denotes the iterated maximum modulus (given, for example, by $M^2(R,f) = M(M(R,f),f)$) and where $R$ is chosen large enough so that $M^k(R,f) \to \infty$ as $k\to \infty$. Provided this last condition is satisfied, $A(f)$ is independent of the actual choice of $R$.
The fast escaping set consists of points which escape `as fast as possible', commensurate with the growth of the function. The fast escaping set for transcendental entire functions in the plane has received much recent attention, see for example \cite{RS},  and the fast escaping set for quasiregular mappings in $\R^n$ of transcendental type was investigated in \cite{BDF, BFN}.
For complex polynomials it is clear that $I(f)$ and $A(f)$ agree because every point which escapes does so at the same rate.

\begin{theorem}\label{thm:I(f)}
Let $f:\R^n\to\R^n$ be a quasiregular map of polynomial type that satisfies $\deg f>K_I(f)$.
\begin{enumerate}
\item There exist $S_0>1$ and $R>0$ such that, for all $S>S_0$, $r>R$ and $k\in\N$,
\[  SM^k(r,f) < m^k(Sr,f), \]
where $m(r,f):=\inf_{|x|=r} |f(x)|$ denotes the minimum modulus.
\item We have
\[ I(f) = A(f). \]
\item If $x, y \in I(f)$, then there exists $N\in\N$ such that, for all large $k$,
\[ |f^k(x)| < |f^{k+N}(y)|. \]
\item Let $F$ be a compact subset of $I(f)$. Then there exists $\alpha>1$ and $j\in\N$ such that
\[ \frac{1}{\alpha} < \frac{ \log |f^k(x)|}{ \log |f^k(y)| } < \alpha \]
for all $x,y \in F$ and all $k\ge j$.
\end{enumerate}
\end{theorem}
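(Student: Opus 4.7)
My plan is to view infinity as a strongly superattracting fixed point and transfer Theorem~\ref{thm:ineqs} to that setting. Conjugating by the inversion $\iota(x)=x/|x|^2$ produces a quasiregular map $g=\iota\circ f\circ\iota$ fixing $0$, with the same dilatations as $f$ and with $i(0,g)=\deg f$, so that the hypothesis $\deg f>K_I(f)$ makes $0$ a strongly superattracting fixed point for $g$. Since $l(0,g,s)=1/M(1/s,f)$ and $L(0,g,s)=1/m(1/s,f)$, writing out Theorem~\ref{thm:ineqs} in terms of $f$ gives, for all sufficiently large $r$ and all $S\geq1$,
\[
\frac{S^\mu}{C^2}M(r,f)\leq m(Sr,f)\leq S^\nu M(r,f),\qquad S^\mu m(r,f)\leq M(Sr,f)\leq C^2 S^\nu m(r,f),
\]
where $\mu=(\deg f/K_I(f))^{1/(n-1)}>1$ and $\nu=(K_O(f)\deg f)^{1/(n-1)}$.

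For part~(i) I take $S_0:=C^{2/(\mu-1)}>1$; the lower bound on $m(Sr,f)$ gives the base case $k=1$ of $m^k(Sr,f)>SM^k(r,f)$ immediately. For the inductive step, set $r_k:=M^k(r,f)$, $\rho_k:=m^k(Sr,f)$, and $T:=\rho_k/r_k>S>S_0$; applying the same lower bound at scale $r_k$ with ratio $T$ yields $m(\rho_k,f)\geq T^\mu M(r_k,f)/C^2>TM(r_k,f)>SM^{k+1}(r,f)$. For part~(ii), the inclusion $A(f)\subseteq I(f)$ is immediate; conversely, given $x\in I(f)$, pick $K$ with $|f^K(x)|>S_0R$ for $R$ large, and run the same induction along the actual orbit to deduce $|f^{K+k}(x)|\geq S_0 M^k(R,f)$ for every $k\geq 0$, placing $x$ in $A(f)$. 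Part~(iii) then follows by arranging $R>|x|$ in the argument for $y$: we obtain $|f^k(x)|\leq M^k(|x|,f)<M^k(R,f)\leq|f^{k+N}(y)|$ for $N$ chosen from the previous step.

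For part~(iv), I first need a uniform version of~(ii) on the compact set $F\subseteq I(f)$. By continuity, each $x\in F$ has a neighbourhood $U_x$ on which $|f^{K_x}|>S_0R$; compactness yields a finite subcover, and setting $K=\max K_{x_i}$ the room-to-spare in the induction above upgrades this to $|f^K(y)|>S_0R$ (with $R>\max_F|\cdot|$) uniformly for $y\in F$. This produces the uniform comparison $|f^k(x)|<|f^{k+K}(y)|$ for all $x,y\in F$ and $k\geq 0$. Iterating the upper bound $\log M(s,f)\leq \nu\log s+O(1)$ (from the second displayed inequality) $K$ times gives $\log|f^{k+K}(y)|\leq \nu^K\log|f^k(y)|+O(\nu^K)$, whence $\log|f^k(x)|\leq \nu^K\log|f^k(y)|+O(\nu^K)$. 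Since $\log|f^k(y)|\to\infty$, for large $k$ this forces $\log|f^k(x)|/\log|f^k(y)|<2\nu^K$, and symmetry gives $\alpha=2\nu^K$. The main obstacle is the uniformization step in part~(iv), together with the careful bookkeeping needed to transfer Theorem~\ref{thm:ineqs} through the inversion~$\iota$.
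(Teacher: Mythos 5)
Your proposal is correct in outline and reaches the same conclusions, but it takes a genuinely different route from the paper at the level of organization. The paper first establishes Lemma~\ref{lem:lk>Lk} (the iterated inequality $L^k(x_0,Tr)<Tl^k(x_0,r)$) and Theorem~\ref{thm:superattr} for strongly superattracting fixed points, and then simply applies those results to the conjugate $\tilde f=\iota\circ f\circ\iota$ at $0$, reading off parts (i)--(iv) directly via the identities $M(r,f)=1/l(0,\tilde f,1/r)$ and $m(r,f)=1/L(0,\tilde f,1/r)$. You instead translate Theorem~\ref{thm:ineqs} into $M$/$m$ inequalities for $f$ and re-run the inductions from scratch in the ``large $r$'' setting: your part~(i) induction with $T=\rho_k/r_k$ and $S_0=C^{2/(\mu-1)}$ is essentially the proof of Lemma~\ref{lem:lk>Lk} rewritten through the inversion, and your part~(iv) argument (iterating $\log M(s,f)\le\nu\log s+O(1)$ to get $\alpha=2\nu^K$) reproves the content of Theorem~\ref{thm:superattr}(ii) rather than invoking it on the compact set $g(f^pF)$. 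This makes your proof self-contained but longer; the paper's gains economy from reuse. Two small points you should make explicit: you need $R$ large enough that $M(R,f)>R$ (so that $M^k(R,f)$ is increasing, stays above the threshold where the $M$/$m$ inequalities apply, and tends to infinity, which is also needed for the fast escaping set $A(f)$ to make sense); and in part~(iii)/(iv) you implicitly use that $M(r,f)$ is strictly increasing to pass from $|x|<R$ to $M^k(|x|,f)<M^k(R,f)$, which is true by the open mapping property but worth stating. With those verifications added, the argument is sound.
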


\begin{rem}
There are other alternative definitions for the fast escaping set, see \cite{BDF} for details. These all must coincide for those polynomial type mappings satisfying $\deg f>K_I(f)$ by Theorem~\ref{thm:I(f)}(ii), since  by definition each is a subset of $I(f)$ that contains the set $A(f)$ as given above.
\end{rem}

\subsection{Infinitesimal space}

Recall that quasiregular mappings need only be differentiable almost everywhere. As an extension of the notion of a tangent space, the infinitesimal space $T(x_0,f)$ was introduced in \cite{GMRV} to study points at which a quasiregular mapping fails to be well approximated by a non-degenerate linear map. It is defined as follows. Let $f:E \to \R^n$ be a non-constant $K$-quasiregular mapping and let $x_0 \in E$. For $x\in B(0,d(x_0,\partial E)/r)$, let
\begin{equation}\label{infsp1}
F_r(x) = \frac{ f(x_0 + rx) - f(x_0) }{\rho(x_0,f,r) },
\end{equation}
where $\rho(x_0,f,r)$ is the mean radius of the image of the ball $B(x_0,r)$ under the mapping $f$,
\[ \rho(x_0,f,r) = \left ( \frac{\meas f(B(x_0,r))}{\meas B(0,1)} \right )^{1/n}.\]
The infinitesimal space $T(x_0,f)$ consists of all locally uniformly convergent limit functions $h=\lim F_{r_j}$ as $r_j\to0$.
An element of $T(x_0,f)$ is called an infinitesimal mapping.

By results in \cite{GMRV}, $T(x_0,f)$ is always non-empty and every infinitesimal mapping is a $K$-quasiregular mapping of polynomial type that fixes~$0$, with local index at~$0$ and degree both equal to $i(x_0,f)$. If $f$ has a non-zero derivative at $x_0$, then $T(x_0,f)$ consists only of a normalized multiple of the linear mapping representing the derivative of $f$.

We use Theorem \ref{thm:ineqs} to prove that every mapping in an infinitesimal space has the same H\"older behaviour, compare with Theorem~\ref{rick2} below.

\begin{theorem}\label{thm:gmrv}
Using the notation above, let $h\in T(x_0,f)$. Then for $|x| \leq 1$ we have
\[ \frac{|x|^{\nu}}{C^2} \leq |h(x)| \leq C^2 |x|^{\mu},\]
and for $|x|\ge1$ we have
\[ \frac{|x|^{\mu}}{C^2} \leq |h(x)| \leq C^2 |x|^{\nu},\]
where $\nu = (K_O(f)i(x_0,f))^{1/(n-1)}$, $\mu = ( i(x_0,f)/K_I(f) )^{1/(n-1)}$ and the constant $C$ depends only on $n$, $K_O(f)$ and $i(x_0,f)$.
\end{theorem}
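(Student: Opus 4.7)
The plan is to apply Theorem~\ref{thm:ineqs} to $f$ at $x_0$, reinterpret the bounds in terms of the normalized maps $F_{r_j}$, and then pass to the locally uniform limit. The key identities are
\[ l(0,F_{r_j},s) = \frac{l(x_0,f,sr_j)}{\rho(x_0,f,r_j)}, \qquad L(0,F_{r_j},s)=\frac{L(x_0,f,sr_j)}{\rho(x_0,f,r_j)}, \]
which follow immediately from~\eqref{infsp1} together with $F_{r_j}(0)=0$ (recall that $F_{r_j}(0) = 0$ since $f(x_0)$ is subtracted). Since $F_{r_j}\to h$ locally uniformly and $h(0)=0$, any inequality of the form $A_j \le |F_{r_j}(x)| \le B_j$ passes to $\lim A_j \le |h(x)| \le \lim B_j$ once constants are uniform in $j$.

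To remove the $\rho$ in the denominator, I first record the standard inclusion
\[ B(f(x_0),l(x_0,f,r))\subseteq f(B(x_0,r))\subseteq B(f(x_0),L(x_0,f,r)), \]
where the right-hand containment is trivial and the left-hand one follows from $f$ being open and continuous (any point of $B(f(x_0),l(x_0,f,r))$ not in the image would force a path from $f(x_0)$ out of the image, which must cross $f(\partial B(x_0,r))$, contradicting the definition of $l$). Taking measures yields $l(x_0,f,r)\le\rho(x_0,f,r)\le L(x_0,f,r)$.

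Now split into the two ranges. For $|x|=s\le 1$, apply Theorem~\ref{thm:ineqs} to $f$ at $x_0$ with $r=r_j$ (small enough to be below the theorem's threshold $r_0$) and $T=s$. The inequalities give
\[ \frac{l(x_0,f,sr_j)}{L(x_0,f,r_j)}\ge \frac{s^\nu}{C^2}, \qquad \frac{L(x_0,f,sr_j)}{l(x_0,f,r_j)}\le C^2 s^\mu. \]
Combining with $l(x_0,f,r_j)\le\rho(x_0,f,r_j)\le L(x_0,f,r_j)$ and the identities above gives
\[ \frac{s^\nu}{C^2}\le l(0,F_{r_j},s)\le |F_{r_j}(x)|\le L(0,F_{r_j},s)\le C^2 s^\mu. \]
For $|x|=s\ge 1$, apply Theorem~\ref{thm:ineqs} instead with $r=sr_j$ and $T=1/s$ (noting $sr_j\to 0$), which by the same chain yields
\[ \frac{s^\mu}{C^2}\le |F_{r_j}(x)|\le C^2 s^\nu. \]
Letting $j\to\infty$ in both cases delivers the stated bounds on $|h(x)|$. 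Since the constant $C$ in Theorem~\ref{thm:ineqs} depends only on $n$, $K_O(f)$ and $i(x_0,f)$, this dependence is inherited here.

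There is no serious obstacle; the only subtlety worth highlighting is choosing $r$ and $T$ in Theorem~\ref{thm:ineqs} differently in the two ranges $|x|\le 1$ and $|x|\ge 1$, so that the exponent of $s$ in the final bound lands on $\mu$ or $\nu$ as required. Verifying the sandwich $l\le\rho\le L$ is standard but should be stated explicitly since Theorem~\ref{thm:ineqs} is phrased in terms of $l$ and $L$ while the definition of $F_{r_j}$ uses $\rho$.
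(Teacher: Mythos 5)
Your argument is correct and follows essentially the same route as the paper's proof: establish $l(x_0,f,r)\le\rho(x_0,f,r)\le L(x_0,f,r)$, sandwich $|F_{r_j}(x)|$ between $l(0,f,r_j|x|)/L(0,f,r_j)$ and $L(0,f,r_j|x|)/l(0,f,r_j)$, apply Theorem~\ref{thm:ineqs} with the two parameter choices you describe (the paper's substitution $r_j'=r_j|x|$ for $|x|\ge1$ is identical to your choice $r=sr_j$, $T=1/s$), and pass to the locally uniform limit. The only cosmetic difference is that you spell out the justification of $l\le\rho\le L$ via the inclusion $B(f(x_0),l)\subseteq f(B(x_0,r))\subseteq B(f(x_0),L)$, which the paper records as clear.
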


\section{Preliminaries}\label{sect:defns}

We denote by $B(x,r)$ the Euclidean ball centred at $x\in\R^n$ of radius $r>0$, by $S(x,r)$ the boundary of $B(x,r)$ and by $A(x,r,s)$ the spherical ring domain $\{y\in \R^n : r< |y-x| <s \}$. A continuous mapping $f:E \rightarrow \R^{n}$ defined on a domain $E \subseteq \R^{n}$ is called \emph{quasiregular} if it belongs to the Sobolev space $W^{1}_{n, \textrm{loc}}(E)$ and there exists $K \in [1, \infty)$ such that
\begin{equation}
\label{eq2.1}
\av f'(x) \av ^{n} \leq K J_{f}(x)
\end{equation}
almost everywhere in $E$. Here $J_{f}(x)$ denotes the Jacobian determinant of $f$ at $x \in E$. The smallest constant $K \geq 1$ for which (\ref{eq2.1}) holds is called the \emph{outer dilatation} $K_{O}(f)$. If $f$ is quasiregular, then there exists $K' \in[1, \infty)$ such that
\begin{equation}
\label{eq2.2}
J_{f}(x) \leq K' \inf _{\av h \av =1} \av f'(x) h \av ^{n}
\end{equation}
almost everywhere in $E$. The smallest constant $K' \geq 1$ for which (\ref{eq2.2}) holds is called the \emph{inner dilatation} $K_{I}(f)$. The \emph{dilatation} $K(f)$ of $f$ is the larger of $K_{O}(f)$ and $K_{I}(f)$, and we say that $f$ is $K$-quasiregular if $K(f)\le K$.

The following result shows that quasiregular mappings are locally H\"older continuous.

\begin{theorem}[{\cite[Theorem III.4.7]{Rickman}}]
\label{rick2}
Let $f:E\to \R^n$ be quasiregular and non-constant, and let $x\in E$. Then there exist positive numbers $\rho, A,B$ such that, for $y\in B(x,\rho)$,
\[ A|y-x|^{\nu} \leq |f(x)-f(y)| \leq B|y-x|^{\mu},\]
where $\nu = (K_O(f)i(x,f))^{1/(n-1)}$ and $\mu = (i(x,f)/K_I(f))^{1/(n-1)}$.
\end{theorem}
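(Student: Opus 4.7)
My plan is to derive Theorem~\ref{rick2} directly from the two-sided shell distortion estimate of Theorem~\ref{thm:ineqs} by freezing one reference scale and letting the target scale shrink inside it. I would fix $x\in E$ and let $r_0>0$, $C>1$ be the constants produced by Theorem~\ref{thm:ineqs} at $x$; then choose any $r_1\in(0,r_0)$ and set $\rho=r_1$. For $y\in B(x,\rho)\setminus\{x\}$, write $s=|y-x|$ and $T=s/r_1\in(0,1]$. The elementary sandwich $l(x,s)\le|f(y)-f(x)|\le L(x,s)$ reduces the statement to producing positive constants $A,B$, independent of $s$, with $l(x,s)\ge A s^{\nu}$ and $L(x,s)\le B s^{\mu}$ for $s\in(0,\rho)$.

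With those choices in hand, the right-hand inequality of \eqref{L/l} applied with $r=r_1$ and this $T$ rearranges to
\[ l(x,s)\;\ge\;\frac{L(x,r_1)}{C^2\, r_1^{\nu}}\, s^{\nu}, \]
so I would take $A=L(x,r_1)/(C^2 r_1^{\nu})$. Symmetrically, the left-hand inequality of \eqref{l/L} rearranges to
\[ L(x,s)\;\le\;\frac{C^2\, l(x,r_1)}{r_1^{\mu}}\, s^{\mu}, \]
giving $B=C^2 l(x,r_1)/r_1^{\mu}$. The exponents $\mu=(i(x,f)/K_I(f))^{1/(n-1)}$ and $\nu=(K_O(f)i(x,f))^{1/(n-1)}$ in the conclusion match those of Theorem~\ref{rick2} automatically, since they are inherited verbatim from Theorem~\ref{thm:ineqs}.

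All that remains is to verify $A,B>0$, that is, that $l(x,r_1)$ and $L(x,r_1)$ are strictly positive. Openness of the non-constant quasiregular map $f$ implies that $f(B(x,r_1))$ contains a neighbourhood of $f(x)$, so $L(x,r_1)>0$; specialising \eqref{L/l} to $T=1$, $r=r_1$ then gives $L(x,r_1)\le C^2\, l(x,r_1)$, forcing $l(x,r_1)>0$ as well. Once Theorem~\ref{thm:ineqs} is granted there is essentially no genuine obstacle here: the whole argument is a clean rescaling of one fixed reference annulus, and the only delicate point is the positivity check just made. Conceptually, Theorem~\ref{rick2} is a \emph{one-scale} consequence of the sharper two-scale comparison in Theorem~\ref{thm:ineqs}, and is of course itself a classical result of Rickman; the present derivation should be read as a consistency observation within the paper's framework rather than a logically independent proof.
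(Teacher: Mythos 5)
Your argument is correct, but it is worth being clear that the paper itself offers no proof of this statement: Theorem~\ref{rick2} is quoted verbatim from Rickman (Theorem~III.4.7 there) as a black-box preliminary, so there is no ``paper proof'' to match. What you give instead is a derivation of the local H\"older estimate from Theorem~\ref{thm:ineqs}, and the algebra checks out: with $r=r_1$ fixed and $T=s/r_1$, the right-hand side of \eqref{L/l} rearranges to $l(x,s)\ge L(x,r_1)C^{-2}r_1^{-\nu}s^{\nu}$ and the left-hand side of \eqref{l/L} to $L(x,s)\le C^2 l(x,r_1)r_1^{-\mu}s^{\mu}$, which together with $l(x,s)\le|f(y)-f(x)|\le L(x,s)$ give exactly the claimed bounds, and your positivity check for $A$ and $B$ via openness and the $T=1$ case of \eqref{L/l} is sound. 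Importantly, this is not circular within the paper's logical structure: the proof of Theorem~\ref{thm:ineqs} rests on V\"ais\"al\"a's inequality, the $K_O$-inequality and Lemma~\ref{lem:rick1}, not on Theorem~\ref{rick2} (the paper only invokes Theorem~\ref{rick2} later, e.g.\ in Lemma~\ref{lem:lk>Lk} and Theorem~\ref{thm:superattr}). What your route buys is economy --- the one-scale H\"older estimate really is the $r$-frozen specialization of the two-scale comparison --- at the cost of being a consistency observation rather than an independent proof, since Theorem~\ref{thm:ineqs} is itself proved by the same modulus-of-path-families machinery that Rickman uses for III.4.7; you acknowledge this explicitly, which is the right caveat.
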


The next two lemmas concern the quantities $L(x,r)$ and $l(x,r)$ defined by \eqref{l,L defn}. The first of these is an  immediate consequence of \cite[Theorem~II.4.3]{Rickman}.

\begin{lemma}\label{lem:rick1}
Let $f:E\to\R^n$ be quasiregular and let $x\in E$. Then there exists $R_0>0$ such that if $r<R_0$ then
\[ L(x,r) \le Cl(x,r), \]
where $C$ depends only on $n$, $K_O(f)$ and $i(x,f)$.
\end{lemma}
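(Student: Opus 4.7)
The plan is to invoke Rickman's Theorem II.4.3 directly, which the excerpt already flags as giving the conclusion immediately. That theorem provides a quantitative local openness result for quasiregular mappings: for each $x\in E$ there is $R_0>0$ such that, for $0<r<R_0$, the component of $f^{-1}(B(f(x),L(x,r)))$ containing $x$ is a normal neighbourhood of $x$, and moreover the image of $S(x,r)$ lies in a round shell whose inner and outer radii are comparable up to a constant depending only on $n$, $K_O(f)$ and $i(x,f)$. Reading off this comparability is exactly the inequality $L(x,r)\le C\,l(x,r)$.

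The first step, therefore, is to fix $x\in E$ and choose $R_0$ small enough that the ball $B(x,r)$ lies in a normal neighbourhood of $x$ for every $r<R_0$; this uses discreteness of $f$ at $x$, so that $f^{-1}(f(x))\cap \overline{B(x,r)}=\{x\}$ for $r$ sufficiently small. The second step is to apply the modulus inequality from Rickman's II.4.3 to the path family separating $S(x,r)$ from $S(x,R_0)$ inside this normal neighbourhood. Because the local index $i(x,f)$ counts exactly the multiplicity of $f$ on the normal neighbourhood, and the outer dilatation bounds the distortion of modulus under $f$, the modulus of the image ring between $f(S(x,r))$ and $f(S(x,R_0))$ is controlled. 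The Loewner-type estimate for the modulus of a ring whose boundary components have prescribed inner and outer diameters then forces $L(x,r)/l(x,r)\le C(n,K_O(f),i(x,f))$.

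The third step is to check the dependence of $C$. Since the modulus estimate in Rickman II.4.3 produces a constant depending only on $n$, on $K_O(f)$, and on the degree $i(x,f)$ of the local mapping, the bound $C$ inherits the same dependence; crucially no dependence on $R_0$ itself remains because the inequality is homogeneous in the two radii of the image shell.

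The main obstacle, were one to reprove this from scratch, would be choosing the right path families so that the constant genuinely depends only on $n$, $K_O(f)$ and $i(x,f)$, rather than on global data of $f$; this is precisely what Rickman's II.4.3 packages for us. Consequently the proof reduces to invoking that theorem and observing that choosing $R_0$ small enough to sit inside a normal neighbourhood of $x$ is sufficient.
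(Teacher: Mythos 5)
Your proposal is correct and takes essentially the same approach as the paper, which simply remarks that the lemma is an immediate consequence of \cite[Theorem~II.4.3]{Rickman}. Your additional sketch of the mechanism behind Rickman's theorem (normal neighbourhoods, modulus estimates, $K_O$-inequality) is reasonable background, though not needed since both you and the authors treat the citation as the proof.
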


\begin{lemma}\label{lem:incr}
Let $f:E \to \R^n$ be quasiregular and non-constant and let $x\in E$.  Then there exists $R>0$ such that $L(x,r)$ and $l(x,r)$ are increasing in $r$ for $0<r<R$.
\end{lemma}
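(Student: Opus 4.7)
The plan is to exploit the fact that a non-constant quasiregular map $f$ is open and discrete, which yields both a maximum principle for $|f-c|$ on $E$ (for any $c\in\R^n$) and an analogous minimum principle on regions where $|f-c|>0$. Indeed, an interior local maximum of $|f-c|$ at some $y_0$ would place $f(y_0)$ simultaneously in the open image $f(B(y_0,\delta))$ (for small $\delta$) and on the boundary of $\overline{B(c,|f(y_0)-c|)}$, which contains this open image---impossible, since an open set containing a boundary point of a ball must also contain points outside the ball. The dual argument rules out interior minima of $|f-c|$ away from $f^{-1}(c)$.

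The monotonicity of $L$ follows at once: with $c=f(x)$, the maximum principle gives that $L(x,r)$ equals the maximum of $|f-f(x)|$ over the entire closed ball $\overline{B(x,r)}$; then $L(x,r_1)\ge L(x,r_2)$ for $r_1<r_2$ would place this maximum on $\overline{B(x,r_2)}$ at an interior point of $\overline{B(x,r_1)}$, contradiction. For $l$, I would first use discreteness to fix $R_0>0$ with $\overline{B(x,R_0)}\subset E$ and $f(y)\neq f(x)$ on $\overline{B(x,R_0)}\setminus\{x\}$. Applying the minimum principle on any closed annulus $\{r_1\le|y-x|\le r_2\}\subset B(x,R_0)$, where $|f-f(x)|>0$, gives the identity
\[ \min_{r_1\le|y-x|\le r_2}|f(y)-f(x)|=\min\bigl(l(x,r_1),\,l(x,r_2)\bigr). \]
To deduce non-decreasingness of $l$, suppose for contradiction that $l$ fails to be non-decreasing on every subinterval $(0,R')\subset(0,R_0)$. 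Then for each small $\epsilon$ there exist $r_1<r_2<\epsilon$ with $l(x,r_1)>l(x,r_2)$; applying the identity to $\{r_1\le|y-x|\le r\}$ for arbitrary $r\in(r_2,R_0)$ and noting that $S(x,r_2)$ lies in its interior with a point realizing $l(x,r_2)$, I deduce $l(x,r)\le l(x,r_2)$. As $\epsilon\to 0$ we have $r_2\to 0$ and $l(x,r_2)\to 0$ by continuity of $f$ at $x$, so $l(x,r)=0$ for any fixed $r\in(0,R_0)$, contradicting the choice of $R_0$. Strict monotonicity then follows similarly: if $l$ were constant on some $[r_1,r_2]$, picking $r^*\in(r_1,r_2)$ and $y^*\in S(x,r^*)$ realizing the common value $s^*$, openness would furnish $y'$ close to $y^*$ with $|f(y')-f(x)|<s^*$ and $|y'-x|\in(r_1,r_2)$, contradicting $l(x,|y'-x|)=s^*$.

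The principal obstacle is the monotonicity of $l$. Unlike $L$, the function $|f-f(x)|$ vanishes at $x$ itself, so no global minimum principle is available; the annulus identity above combined with the limit $l(x,r)\to 0$ as $r\to 0$ is the essential device that rules out $l$ decreasing anywhere on a sufficiently small interval $(0,R)$. The final $R$ in the lemma can then be taken as the smaller of the two valid ranges for $L$ and $l$.
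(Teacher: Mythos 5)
Your proof is correct, and its core tools are the same as the paper's: the maximum principle (openness of $f$) for $L$, and the minimum principle for $|f-f(x)|$ away from $f^{-1}(f(x))$ for $l$. The treatment of $L$ matches the paper almost verbatim. The treatment of $l$, however, is packaged differently. The paper observes that $|f-f(x)|$ has no local minima on a punctured ball, deduces that $l(x,\cdot)$ has no local minima on $(0,s)$, hence (by continuity) at most one local maximum there, and then concludes from $l(x,0)=0$, $l\ge 0$ that $l$ is increasing on an initial interval. You instead derive the boundary identity $\min_{r_1\le|y-x|\le r_2}|f(y)-f(x)|=\min(l(x,r_1),l(x,r_2))$ from the minimum principle on annuli, and then run a limiting contradiction: any failure of monotonicity arbitrarily close to $0$ would force $l(x,r)\le l(x,r_2)$ for $r_2\to 0$, hence $l\equiv 0$, contradicting discreteness. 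Both routes are valid; the paper's "no local minima $\Rightarrow$ at most one local max" step is a bit more economical, whereas your annulus identity is more explicit and gives a direct quantitative handle on $l$, at the cost of a longer argument. Your separate step establishing strictness via openness is also fine, and in fact aligns with the content implicit in the paper's "no local minima" formulation (which already excludes intervals of constancy).
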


\begin{proof}
Without loss of generality, we may assume that $x=0$.
Since non-constant quasiregular mappings are open mappings \cite[Theorem~I.4.1]{Rickman}, they satisfy a maximum principle and hence $\sup_{|y|\leq r}|f(y)-f(0)|$ is achieved on $S(0,r)$ and is equal to $L(0,r)$. Therefore $L(0,r)$ is increasing.

Non-constant quasiregular mappings are also discrete \cite[Theorem~I.4.1]{Rickman}, which means there exists $s>0$ such that $f(y)\ne f(0)$ for $0<|y|<s$. 
As $f(y)-f(0)$ is an open mapping, we see that $|f(y)-f(0)|$ has no local minima for $0<|y|<s$. We deduce that $l(0,r)$ has no local minima for $r\in(0,s)$.
Since $l(0,r)$ is continuous, this implies that $l(0,r)$ has at most one local maximum for $r\in(0,s)$. As $l(0,0)=0$ and $l(0,r)\ge0$, it follows that $l(0,r)$ is increasing over some subinterval $(0,R)\subseteq (0,s)$.
\end{proof}


The following notation will be used for certain path families in $\R^n$. If $E,F,D\subseteq\R^n$, then we denote by $\Delta(E,F;D)$ the family of paths that have one endpoint in each of $E,F$ and otherwise lie in $D$. For brevity, when $x\in\R^n$ and $0<r<s$, we write
\[ \Delta(x,r,s):= \Delta(S(x,r), S(x,s); B(x,s)). \]
See Chapter~II of \cite{Rickman} for details of the modulus $M(\Gamma)$ of a path family $\Gamma$. We note in particular that if every path in a family $\Gamma_2$ has a subpath in $\Gamma_1$, then $M(\Gamma_2)\le M(\Gamma_1)$, and also that
\eqn M(\Delta(x,r,s)) = \frac{\omega_{n-1}}{(\log (s/r))^{n-1}}, \label{M(annulus)} \eqnend
where $\omega_{n-1}$ is the $(n-1)$-dimensional surface area of the unit sphere in $\R^n$.

For a quasiregular mapping $f:E\to\R^n$, a domain $U$ compactly contained in $E$ is called a \emph{normal neighbourhood} of $x$ if $f(\partial U) = \partial f(U)$ and $U\cap f^{-1}(f(x))=\{x\}$. For such a normal neighbourhood, \cite[Proposition~I.4.10]{Rickman} gives that $N(f,U):=\sup_y\card f^{-1}(y)\cap U = i(x,f)$.

\section{Local behaviour of quasiregular mappings}

\begin{proof}[Proof of Theorem~\ref{thm:ineqs}]

Let $f$ and $x$ satisfy the hypotheses of the theorem.
Denote by $U(x,f,s)$ the component of $f^{-1}B(f(x),s)$ containing $x$. By \cite[Lemma~I.4.9]{Rickman} there exists $s_x>0$ such that $U(x,f,s)$ is a normal neighbourhood of $x$ and $f(U(x,f,s))=B(f(x),s)$ for $0<s\le s_x$. We choose $r_0$ small enough that $L(x,r)\le s_{x}$ for all $0<r<r_0$. We insist further that $r_0\le R_0$, where $R_0$ is as in Lemma~\ref{lem:rick1}.

We now take $r\in(0,r_0)$ and $0<T\le 1$ and begin by aiming to prove the left hand inequality in \eqref{L/l}.
We consider the sets
\[ U_l = U(x,f,l(x,Tr)), \qquad U_L=U(x,f,L(x,r))\]
and the path families
\[ \Gamma = \Delta(\partial U_l, \partial U_L ; U_L), \qquad \Gamma' = \Delta(f(x),l(x,Tr), L(x,r)).   \]
Note that by \eqref{M(annulus)},
\eqn
 M(\Gamma') = \omega_{n-1}\left(\log \frac{L(x,r)}{l(x,Tr)}\right)^{1-n}. \label{capf(X)}
\eqnend
Furthermore, as $\partial{U_l}\subseteq \overline{B(x,Tr)}$ and $B(x,r)\subseteq U_L$, every path in $\Gamma$ has a subpath in $\Delta(x,Tr,r)$ and thus
\eqn
 M(\Gamma) \le M(\Delta(x,Tr,r) )=\omega_{n-1}(\log 1/T)^{1-n}. \label{capX}
\eqnend

Note that $U_l$ and $U_L$ are both normal neighbourhoods of $x$. Hence $N(f,U_L)=i(x,f)$ and moreover $\Gamma$ is precisely the family of paths $\gamma$ in $U_L$ such that $f\circ\gamma\in\Gamma'$. Therefore V\"{a}is\"{a}l\"{a}'s inequality \cite[Corollary~II.9.2]{Rickman} states that
\[ M(\Gamma') \le \frac{K_I(f)}{i(x,f)}M(\Gamma). \]
Combining this with \eqref{capf(X)} and \eqref{capX} yields the left hand side of \eqref{L/l}. The left hand side of \eqref{l/L} follows immediately from the left hand side of \eqref{L/l} and Lemma~\ref{lem:rick1}.

We next prove the right hand inequality of \eqref{l/L}.
We may assume that $L(x,Tr)<l(x,r)$, for otherwise the result is clear. We consider the path family $\Delta:=\Delta(x,Tr,r)$. These paths lie in $B(x,r)$ and as this ball is contained in $U_L$ we have that $N(f,B(x,r))\le N(f,U_L)=i(x,f)$. Now the $K_O$-inequality \cite[Theorem II.2.4]{Rickman} states that
\eqn
 M(\Delta) \le K_O(f) i(x,f) M(f\Delta). \label{M(Delta)}
\eqnend
Let $\gamma$ be a path in $f\Delta$. Then $\gamma$ has one endpoint in $\{f(y): |y-x|=Tr\}$ and the other in $\{f(y): |y-x|=r\}$. It follows that $\gamma$ must have a subpath in $\Delta(f(x), L(x,Tr), l(x,r))$. Therefore
\[ M(f\Delta) \le M(\Delta(f(x), L(x,Tr), l(x,r)) = \frac{\omega_{n-1}}{(\log (l(x,r)/L(x,Tr)))^{n-1}}. \]
Combining this with \eqref{M(Delta)} and the fact that $M(\Delta)=\omega_{n-1}(\log 1/T)^{1-n}$ now gives the right hand side of \eqref{l/L}. The right hand inequality of \eqref{L/l} follows by again applying Lemma~\ref{lem:rick1}.
\end{proof}

\section{Iteration near strongly superattracting fixed points}

The following lemma deals with iterated versions of $L(x,r)$ and $l(x,r)$ from \eqref{l,L defn}. We set $l^1(x,r)=l(x,r)$ and $l^{k+1}(x,r)=l(x,l^k(x,r))$ for $k\ge1$, and define $L^k(x,r)$ similarly.

\begin{lemma}\label{lem:lk>Lk}
Let $f:E\to \R^n$ be quasiregular and let $x_0\in E$ be a strongly superattracting fixed point. Then there exist $T_0\in(0,1)$ and $r'>0$ such that, for all $0<T<T_0$, all $0<r<r'$ and all $k\in\N$,
\eqn
L^k(x_0,Tr) < Tl^k(x_0,r). \label{eqn:Lk>Lk}
\eqnend
\end{lemma}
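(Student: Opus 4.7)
The strategy is induction on $k$, with Theorem~\ref{thm:ineqs} providing the base case and the monotonicity of Lemma~\ref{lem:incr} driving the inductive step. Since $x_0$ is strongly superattracting, $i(x_0,f)>K_I(f)$ and so the exponent $\mu = (i(x_0,f)/K_I(f))^{1/(n-1)}$ in Theorem~\ref{thm:ineqs} satisfies $\mu>1$. Rearranging the left hand inequality of \eqref{l/L}, applied at $x_0$ and noting that $f(x_0)=x_0$, yields
\[ L(x_0, Tr) \le C^2 T^{\mu}\, l(x_0, r) \qquad \mbox{for } 0<T\le 1,\ 0<r<r_0. \]
Setting $T_0 = C^{-2/(\mu-1)} \in (0,1)$, every $T<T_0$ makes $C^2 T^{\mu-1}<1$, so this inequality collapses to the $k=1$ case of \eqref{eqn:Lk>Lk}.

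Next, I would shrink $r'$ so that $r'\le r_0$, so that $r'$ is at most the radius $R$ of Lemma~\ref{lem:incr} (giving monotonicity of $L(x_0,\cdot)$ and $l(x_0,\cdot)$ on $(0,r')$), and so that $L(x_0,r)<r$ throughout $(0,r')$. This last requirement is secured by the H\"{o}lder estimate of Theorem~\ref{rick2}, which gives $L(x_0,r)\le B r^{\mu}$ with $\mu>1$, so that $L(x_0,r)/r\to 0$ as $r\to 0$. Iterating $L(x_0,r)<r$ with monotonicity shows inductively that $l^k(x_0,r)\le L^k(x_0,r)<r<r'$ for every $k\in\N$, so the iterated radii always remain in the range where both Theorem~\ref{thm:ineqs} and Lemma~\ref{lem:incr} apply.

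For the inductive step, assume \eqref{eqn:Lk>Lk} for some $k\ge 1$. Since both $L^k(x_0, Tr)$ and $T\, l^k(x_0, r)$ lie in $(0,r')$, monotonicity of $L(x_0,\cdot)$ gives
\[ L^{k+1}(x_0, Tr) = L\bigl(x_0,\, L^k(x_0, Tr)\bigr) < L\bigl(x_0,\, T\, l^k(x_0, r)\bigr). \]
Applying the base case with $r$ replaced by $l^k(x_0,r)\in (0,r')$ (and the same $T$) then yields
\[ L\bigl(x_0,\, T\, l^k(x_0, r)\bigr) < T\, l\bigl(x_0,\, l^k(x_0, r)\bigr) = T\, l^{k+1}(x_0, r), \]
so combining the two displays gives \eqref{eqn:Lk>Lk} with $k$ replaced by $k+1$.

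The one delicate point is verifying that all the iterated radii stay inside the domain on which Theorem~\ref{thm:ineqs} and monotonicity have been established; this reduces to showing $l^k(x_0,r)<r$ for every $k$, which in turn is forced by the single appeal to H\"{o}lder continuity that supplies $L(x_0,r)<r$ for small $r$. Everything else is a direct consequence of $\mu>1$, so the principal role of the strong superattraction hypothesis is to push this exponent above one.
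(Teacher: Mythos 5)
Your proof is correct and follows essentially the same route as the paper: the base case $k=1$ from the left-hand inequality of \eqref{l/L} with $\mu>1$, and an induction driven by the monotonicity of $L(x_0,\cdot)$ from Lemma~\ref{lem:incr}. The only cosmetic difference is in how you keep the iterated radii inside the good range — you impose $L(x_0,r)<r$ on $(0,r')$, whereas the paper requires $f(B(x_0,r'))\subseteq B(x_0,r')$; both are secured by Theorem~\ref{rick2} and serve the same purpose.
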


\begin{proof}
Choose $T_0$ such that if $0<T<T_0$, then $T^{1-\mu}/C^2 > 1$, where $\mu$ and $C$ are as in Theorem~\ref{thm:ineqs} with $x=x_0$. This is possible because $\mu>1$ due to the fact that $x_0$ is a strongly superattracting fixed point.

Take $r_0$ as in Theorem~\ref{thm:ineqs} and $R_0$ as in Lemma~\ref{lem:incr}, each with $x=x_0$. We choose $0<r'\le\min \{ r_0,R_0\}$  small enough that $f$ maps $B(x_0,r')$ into itself, this being possible by Theorem~\ref{rick2}.

Now take $0<T<T_0$ and $0<r<r'$. Using the left hand side of \eqref{l/L} yields
\eqn
L(x_0,Tr) <  \frac{T^{1-\mu}}{C^2}L(x_0,Tr) \le Tl(x_0,r), \label{k=1}
\eqnend
which is the $k=1$ case of \eqref{eqn:Lk>Lk}. We next proceed by induction, assuming that \eqref{eqn:Lk>Lk} holds for some $k\in\N$. Then, using first Lemma~\ref{lem:incr} and this assumption, followed by \eqref{k=1}, we obtain that
\begin{align*}
L^{k+1}(x_0,Tr) = L(x_0, L^k(x_0,Tr)) &< L(x_0, Tl^k(x_0,r)) \\
&< Tl(x_0, l^k(x_0,r)) = Tl^{k+1}(x_0,r).
\end{align*}
Therefore  \eqref{eqn:Lk>Lk} holds for  all $k\in\N$ by induction.
\end{proof}

\begin{proof}[Proof of Theorem \ref{thm:superattr}]
Let $f$ satisfy the hypotheses of the theorem and without loss of generality assume that $x_0=0$. Choose $T_0$ and $r'$ as in the proof of Lemma~\ref{lem:lk>Lk} and let $0<T<T_0$.

By Lemma~\ref{lem:incr}, both $L(0,r)$ and $l(0,r)$ are increasing for $0<r<r'$, and hence it can be shown by induction that, for $x\in B(0,r')$,
\eqn
 l^k(0,|x|) \le |f^k(x)| \le L^k(0,|x|) \quad \mbox{ for all } k\in\N. \label{l<f<L}
\eqnend

We now prove parts (i) and (ii) of the theorem.

\begin{enumerate}
\item Given $x,y\in\mathcal{A}(0)\setminus O^{-}(0)$, there exist integers $k_0$ and $N$ such that
\[ |f^{k_0}(x)|<r' \quad \mbox{ and } \quad |f^{k_0+N}(y)|<T|f^{k_0}(x)|. \]
Using this together with \eqref{eqn:Lk>Lk}, \eqref{l<f<L} and the fact that $L(0,r)$ is increasing now shows that, for $k\ge k_0$,
\begin{align*}
|f^{k+N}(y)| \le L^{k-k_0}(0,|f^{k_0+N}(y)|) &< L^{k-k_0}(0,T|f^{k_0}(x)|) \\
&< Tl^{k-k_0}(0,|f^{k_0}(x)|) < |f^k(x)|.
\end{align*}

\item Let $F$ be a compact subset of $\mathcal{A}(0)\setminus O^{-}(0)$. Similarly to part~(i), since $f^k\to 0$ uniformly on $F$ we can find integers $k_0$ and $N$ such that
\[ \sup_{x\in F} |f^{k_0}(x)|<r' \quad \mbox{ and } \quad \sup_{y\in F}|f^{k_0+N}(y)|<\inf_{x\in F}T|f^{k_0}(x)|. \]
Here the infimum  is guaranteed to be positive because $F$ is a compact set disjoint from~$O^{-}(0)$. Now, for any $x,y\in F$, arguing exactly as in part~(i) gives that $|f^{k+N}(y)|  < |f^k(x)|$ for all $k\ge k_0$.

The function $f^N$ is quasiregular on $B(0,r')$ with $f^N(0)=0$. It follows from Theorem~\ref{rick2} that there exist $\alpha>1$ and $0<\rho<1$ such that, for $z\in B(0,\rho)$,
\[ |z|^\alpha < |f^N(z)|. \]
Note that there is a large integer $j$, say $j\ge k_0$, such that $|f^{k}(y)|<\rho$ for all $y\in F$ and all $k\ge j$. We may now conclude that, for any $x,y\in F$ and $k\ge j$,
\[ |f^k(y)|^\alpha < |f^{k+N}(y)| < |f^k(x)| < 1 \]
and thus
\[ \frac{ \log |f^k(x)|}{\log | f^k(y)|} < \alpha. \]
Finally, the required lower bound is established by interchanging $x$ and $y$.\qedhere
\end{enumerate}
\end{proof}

\section{Polynomial type mappings}

Let $f$ be a quasiregular mapping of polynomial type. Then $f$ can be extended to a continuous self-map of $\overline{\R^n}=\R\cup\{\infty\}$ by setting $f(\infty)=\infty$. Using appropriately modified definitions (see \cite[p.11]{Rickman}), this extended function $f$ is a quasiregular self-map of $\overline{\R^n}$ with local index at infinity $i(\infty,f)=\deg f$.

Let $g:\overline{\R^n}\to\overline{\R^n}$ be the inversion in the unit sphere given by $g(x)=x/|x|^2$, with $g(0)=\infty$ and $g(\infty)=0$. Note that this mapping satisfies $|g(x)|=1/|x|$ and $g=g^{-1}$. We shall consider the function $\tilde{f}=g\circ f\circ g$ conjugate to the polynomial type mapping $f$. In particular, after fixing $t>0$ large enough that $f(x)\ne0$ for $|x|>t$, it can be shown that $\tilde{f}:B(0,1/t)\to\R^n$ is quasiregular with $K_I(\tilde{f})\le K_I(f)$ and $i(0,\tilde{f})=i(\infty,f)=\deg f$. Observe that $\tilde{f}$ has a fixed point at $0$ and that this is strongly superattracting if $\deg f>K_I(f)$. Adopting the notation mentioned after \eqref{l,L defn}, we have that
\eqn
M(r,f) = \frac{1}{l(0,\tilde{f},\frac1r)} \quad \mbox{ and } \quad m(r,f) = \frac{1}{L(0,\tilde{f},\frac1r)}. \label{MlmL}
\eqnend

\begin{proof}[Proof of Theorem \ref{thm:M(Ar)}]
Take $\tilde{f}$ as above and $S>1$. Then \eqref{MlmL} and \eqref{L/l} of Theorem \ref{thm:ineqs} yield
\[ \frac{M(Sr,f)}{M(r,f)} \le \frac{M(Sr,f)}{m(r,f)} =  \frac{ L(0,\tilde{f},\frac1r) }{ l (0,\tilde{f},\frac{1}{Sr}) } \leq  C^2S^{\nu}\]
for all large $r$. Since the right hand side is independent of $r$, this proves the result.
\end{proof}

\begin{proof}[Proof of Theorem \ref{thm:I(f)}]
Let $f:\R^n \to \R^n$ be a quasiregular mapping of polynomial type such that the degree exceeds the inner dilatation. Then the map $\tilde{f}:B(0,1/t)\to\R^n$  described above is quasiregular and has $0$ as a strongly superattracting fixed point.
\begin{enumerate}
\item Applying Lemma~\ref{lem:lk>Lk} to $\tilde{f}$ gives $T_0\in(0,1)$ and $r'>0$ such that, for all $0<T<T_0$ and $0<r<r'$,
\[ L^k(0,\tilde{f},Tr) < Tl^k(0,\tilde{f},r) \quad \mbox{ for all } k\in\N. \]
We take $S_0=1/T_0$ and $R=1/r'$. Using \eqref{MlmL} now shows that, if $S>S_0$ and $r>R$, then
\[ m^k(Sr,f) = \frac{1}{L^k(0,\tilde{f},\frac{1}{Sr})} > \frac{S}{ l^k(0,\tilde{f},\frac{1}{r})} = S M^k(r,f) \]
for all $k\in\N$.

\item It is clear that $A(f)\subseteq I(f)$. Our aim is to prove the reverse inclusion.

Lemma~\ref{lem:incr} and \eqref{MlmL} together show that $m(r,f)$ is increasing for large $r$, and it follows that if $|x|>r$ then
\[ |f^k(x)| \ge m^k(r,f), \]
for all $k\in\N$.

We choose $S>S_0$ as in part~(i) and fix a large choice of $R$.
Now, given any $x\in I(f)$, there exists $L\in\N$ such that $|f^L(x)|>SR$. By the above and part~(i), we see that
\[ |f^{k+L}(x)| \ge m^k(SR,f) > SM^k(R,f) > M^k(R,f), \]
for all $k\in\N$. Therefore, $x\in A(f)$ as required.

\item Assume first that $x,y \in I(f)$ satisfy $|f^k(x)|>t$ and $|f^k(y)|>t$  for all $k\ge 0$ and  consider again the conjugate $\tilde{f} = g\circ f \circ g$. Then $\tilde{f}^k(g(x))$ and $\tilde{f}^k(g(y))$ are defined and non-zero for all $k$ and converge to $0$ as $k\to\infty$. Hence, $g(x)$ and $g(y)$ lie in the attracting basin $\mathcal{A}(0)$ with respect to $\tilde{f}$. Thus Theorem \ref{thm:superattr}(i) yields the existence of $N \in \N$ such that
\[ |\tilde{f}^{k+N}(g(y)) | < | \tilde{f}^k(g(x)) | \]
for all large $k$. The result follows since $|g(x) | = 1/|x|$.

For general $x,y\in I(f)$, we apply the above argument to $f^p(x)$ and $f^p(y)$ with a suitably large $p\in \N$.

\item The argument here is similar to that of part~(iii). Given a compact subset $F\subseteq I(f)$, we can choose a large integer $p$ such that all iterates $\tilde{f}^k$ are defined on $g(f^pF)$. We can then deduce the result by applying Theorem \ref{thm:superattr}(ii)  to the function $\tilde{f}$ and the compact set $g(f^pF)$. \qedhere
\end{enumerate}
\end{proof}

\section{Infinitesimal space}\label{sect:inf space}

\begin{proof}[Proof of Theorem \ref{thm:gmrv}]

Without loss of generality, we may assume that $x_0=0$. Let $r_0$ and $C$ be as given by Theorem~\ref{thm:ineqs} (applied to $f$ and $x=0$). Suppose that $h\in T(0,f)$ and that $h$ is realized as the limit of $F_{r_j}$ as $r_j\to 0$, where $F_{r_j}$ is as in \eqref{infsp1}.

It is clear that
\begin{equation}\label{lrl}
 l(x_0,f,r) \leq \rho(x_0,f,r) \leq L(x_0,f,r).
\end{equation}

Now take $x\in\R^n$ and note that, for all small $r_j$, the map $F_{r_j}$ is defined at $x$ and $r_j<r_0$. By \eqref{lrl} we have
\begin{equation} \label{infspeq1}
\frac{ l(0,f,r_j|x|)}{L(0,f,r_j)} \leq| F_{r_j}(x) | \leq \frac{ L(0,f,r_j |x| )}{l (0,f, r_j )} .
\end{equation}
Hence, if $|x|\leq1$, then Theorem \ref{thm:ineqs} implies that
\[ \frac{ |x|^{\nu } }{C^2} \leq |F_{r_j}(x) | \leq C^2 |x|^{\mu }.\]
On the other hand, if $|x| \ge1$, then we can rewrite \eqref{infspeq1} as
\[ \frac{ l(0,f,r_j')}{L(0,f,r_j'/|x|)} \leq| F_{r_j}(x) | \leq \frac{ L(0,f,r_j' )}{l (0,f, r_j'/|x| )}, \]
where $r_j' = r_j |x|$. Since $r_j'<r_0$ for all small $r_j$, we can use Theorem~\ref{thm:ineqs} again to obtain
\[ \frac{ |x|^{\mu} }{C^2} \leq |F_{r_j}(x) | \leq C^2|x|^{\nu}.\]
The proof is now complete, because $h$ is the limit of the mappings $F_{r_j}$ as $r_j\to0$.
\end{proof}

\end{document}